\documentclass[12pt,a4paper]{amsart}
\usepackage[english]{babel}
\usepackage[T1]{fontenc}
\usepackage{a4wide,amssymb,amsmath,xypic,ifthen}
\usepackage{graphicx}
\newcommand{\marginnote}[1]{\ifthenelse{\isodd{\thepage}}{\normalmarginpar}
{\reversemarginpar}\marginpar{\fbox{\parbox{15mm}{\sloppy\footnotesize #1}}}}

\parskip=3pt 

\newcommand{\qee}{ \hfill\hspace{2pt}$\triangle$}
\newtheorem{thm}{Theorem}[section]
\newtheorem{corol}[thm]{Corollary}
\newtheorem{lemma}[thm]{Lemma}
\newtheorem{prop}[thm]{Proposition}
\newtheorem{defin}[thm]{Definition}

\theoremstyle{remark}
\newtheorem{rema}[thm]{Remark}
 
\newtheorem{exe}[thm]{Example}
 
\newcommand{\cO}{{\mathcal O}}
\newcommand{\PP}{{\mathbb P}}
\newcommand{\Ext}{{\rm Ext}}
\newcommand{\Hom}{{\rm Hom}}

\newcommand{\M}{{\mathfrak M}}
\newcommand{\MMM}{{\mathcal M}}
\newcommand{\NNN}{{\mathcal N}}
\newcommand{\F}{{\mathcal F}}
\newcommand{\E}{{\mathcal E}}
\newcommand{\bG}{{\boldsymbol{\mathcal G}}}
\newcommand{\G}{{\mathcal G}}

\newcommand{\bU}{{\boldsymbol{\mathcal U}}}

\newcommand{\QQ}{{\mathbb Q}}

\newcommand{\C}{{\mathbb C}}
\newcommand{\FF}{{\mathbb F}}

\newcommand{\ZZ}{{\mathbb Z}}

\newcommand{\ch}{\operatorname{ch}}
\newcommand{\id}{\operatorname{id}}
\renewcommand{\max}{{\operatorname{max}}}
\newcommand{\ob}{\operatorname{ob}}
\newcommand{\rk}{\operatorname{rk}}
\newcommand{\END}{{{\mathcal E}nd}}
\newcommand{\EExt}{{{\mathbb E}{\rm{xt}}}}
\newcommand{\HOM}{{{\mathcal H}om}}
\newcommand{\Spec}{{\operatorname{Spec}}}
\newcommand{\tr}{\mbox{\rm tr}\,}
\newcommand{\coe}{{\cO_{E}}}

\newcommand{\bphi}{{\boldsymbol{\phi}}}
\newcommand{\bpsi}{{\boldsymbol{\psi}}}

\newlength{\rrrr}
\newcommand{\isom}[1]{{\settowidth{\rrrr}{$\scriptstyle{x#1x}$}
\xrightarrow{\makebox[\rrrr]{$\scriptstyle{#1}$}}
\hspace{-0.5\rrrr }\hspace{-1.1 em}
\raisebox{- 0.7 ex}{$\sim$}\hspace{0.7\rrrr }
}}
\newcommand{\isoto}{{\longrightarrow\hspace{-1.5 em}
\raisebox{ 0.6 ex}{$\textstyle\sim$}\hspace{0.8 em}}}

\begin{document}
\begin{flushright} SISSA Preprint  06/2009/fm 
 \\  {\tt arXiv:0906.1436 [math.AG]}  
\end{flushright} 
\bigskip\bigskip
\title{Moduli of framed sheaves on projective surfaces}
\bigskip
\date{\today}
\subjclass[2000]{14D20; 14D21;14J60} 
\keywords{Framed sheaves, moduli spaces, stable pairs, instantons}
\thanks{This research was partly supported  by   {\sc prin}    ``Geometria delle variet\`a algebriche''
and by  the European Science Foundation Programme {\sc Misgam.}
 \\[5pt] \indent E-mail: {\tt bruzzo@sissa.it}, {\tt  markushe@math.univ.lille1.fr} }
 \maketitle \thispagestyle{empty}
\begin{center}{\sc Ugo Bruzzo}$^\ddag$ and {\sc Dimitri Markushevich}$^\S$ 
\\[10pt]  {\small 
$^\ddag$ Scuola Internazionale Superiore di Studi Avanzati, \\ Via Beirut 2-4, 34013
Trieste, Italia \\ and Istituto Nazionale di Fisica Nucleare, Sezione di Trieste
}
\\[10pt]  {\small 
\S Math\'ematiques --- B\^at. M2, Universit\'e Lille 1, \\ F-59655 Villeneuve d'Ascq Cedex, France}
\end{center}

\bigskip\bigskip

\begin{abstract} We show that there exists a fine moduli space for torsion-free sheaves on a projective surface, which have a ``good framing" on a   big and nef divisor. This moduli space is a quasi-projective scheme. This is accomplished by showing that such framed sheaves may be considered as stable pairs
in the sense of Huybrechts and Lehn. We characterize the obstruction  to the smoothness of the moduli space, and discuss some examples on rational surfaces. 
  \end{abstract}

\newpage
\section{Introduction}
There has been recently some interest in the moduli spaces of framed sheaves.
One reason is that they are often smooth and provide desingularizations
of the moduli spaces of ideal instantons, which in turn are singular
\cite{NakaBook,NY-I,NY-L}. For this reason, their equivariant cohomology
under suitable toric actions is relevant to the computation of partition functions,
and more generally expectation values of quantum observables in topological
quantum field theory \cite{Nek,BFMT,NY-I,GaLiu,BPT}.
On the other hand, these moduli spaces can be regarded as higher-rank generalizations
of Hilbert schemes of points, and as such they have interesting connections with
integrable systems \cite{KKD,B-ZN}, representation theory \cite{SalaTort}, etc.

While it is widely assumed that such moduli spaces exist and are well behaved,
an explicit analysis, showing that they are quasi-projective schemes and
are fine moduli spaces,
is missing in the literature. In the present paper we provide such a construction
for the case of framed sheaves on smooth projective surfaces, under some mild conditions.
We show that if $D$ is a big and nef curve in a smooth projective surface $X$,
there is a fine quasi-projective moduli space for sheaves that have a ``good framing''
on $D$ (Theorem \ref{main}).   The point here is that the sheaves under consideration are not assumed a priori to be semistable, and the basic idea is to show that there exists a stability condition making all of them stable, so that our moduli space is an open subscheme  of the moduli space of stable pairs in the sense of Huybrechts and Lehn.

In the papers \cite{Nev2,Nev1} T.~Nevins constructed a scheme structure for these
moduli spaces, however we provide a finer analysis, showing that
these schemes are quasi-projective, and in particular are  separated    and of finite type. Moreover we compute the obstruction to the smoothness
of these moduli spaces (Theorem \ref{smoothness}).  In fact,  the tangent space is well known,
but we provide a more precise description   of the obstruction space
than the one given by Lehn \cite{Lehn}. We show that it lies in the kernel of the trace map,
thus extending a previous result of L\"ubke \cite{Lubke} to the non-locally free case.

In some cases there is another way
to give the moduli spaces $\M(r,c,n)$ a structure
of algebraic variety, i.e.,   by using ADHM data. This was done for vector bundles on
$\PP^2$ by Donaldson \cite{Do}, while (always in the locally free case) the case
of the blow-up of $\PP^2$ at a point is studied in A.~King's thesis \cite{King},
and $\PP^2$ blown-up at an arbitrary number of   points was analyzed
by Buchdahl \cite{Bu3}. The general case (i.e., including torsion-free sheaves) is studied
by C.~Rava for Hirzebruch surfaces \cite{Rava2} and A.A.~Henni for multiple blow-ups of
$\PP^2$ at distinct points \cite{Henni}. The equivalence between the two approaches follows from the
fact that in both cases one has \emph{fine} moduli spaces. On the ADHM side, this is shown
by constructing a universal monad on the moduli space 
\cite{OSS,Henni, Rava}.

In the final section we discuss some examples. i.e., framed bundles on Hirzebruch surfaces
with ``minimal invariants", and rank 2 framed bundles on the blowup of $\PP^2$ at one
point.

 In the present article, all the schemes we consider are
separated and are of finite type over $\C$,
and ``a variety'' is a reduced irreducible scheme of finite type over $\C$.
A ``sheaf'' is always coherent, the
term ``(semi)stable'' always means ``$\mu$-(semi)stable'', and the prefix $\mu$- will be omitted.   Framed sheaves are always assumed to be torsion-free.

\bigskip\section{Framed sheaves}

Let us characterize the objects that we shall study. 
\begin{defin} Let $X$ be a smooth projective variety over $\C$, $D\subset X$ an effective divisor, and ${\E_D}$ a sheaf
on $D$. We say that a sheaf $\E$ on $X$ is $(D,{\E_D})$-framable if $\E$ is torsion-free
and there is an isomorphism
$\E_{\vert D} \stackrel{\sim}{\to} {\E_D}$.
An isomorphism $\phi\colon\E_{\vert D} \stackrel{\sim}{\to} {\E_D}$ will be called
a $(D,{\E_D})$-framing of $\E$. 
A framed sheaf is a pair $(\E,\phi)$ consisting of a $(D,{\E_D})$-framable
sheaf $\E$ and a framing $\phi$.
Two framed sheaves $(\E,\phi)$ and $(\E',\phi')$ are isomorphic if there is 
an isomorphism
$f\colon \E\to\E'$ such that $\phi'\circ f_{\vert D}=\phi$.
\end{defin}

Let us remark that our notion of framing is the same as the one used in \cite{Lehn,Nev1,Nev2}, but is more restrictive than that of \cite{HL2},
where a framing is any homomorphism $\E\to {\E_D}$,
not necessarily factoring through an {\em isomorphism}
$\E_{\vert D} \stackrel{\sim}{\to} {\E_D}$.

Our strategy to show that framed sheaves make up ``good'' moduli spaces
will consist in proving that, under some conditions, the pairs $(\E,\phi)$
are stable according to a notion of stability introduced by Huybrechts and Lehn \cite{HL1,HL2}.
 The definition of stability for framed sheaves depends on the choice of a polarization $H$
on $X$ and a positive real number~$\delta$ (in our notation, $\delta$ is
the leading coefficient of the polynomial $\delta$ in the definition of
(semi)stability in \cite{HL2}).

\begin{defin}[\cite{HL1,HL2}] A pair $(\E,\phi)$ consisting of a torsion-free
sheaf $\E$ and its framing $\phi\colon\E_{\vert D} \stackrel{\sim}{\to} {\E_D}$
is said to be $(H,\delta)$-stable, if for any subsheaf $\G\subset\E$
with $0<\rk\G<\rk \E$, the following inequalities hold:
\begin{enumerate} \item 
$\dfrac{c_1(\G)\cdot H}{\rk(\G)}<\dfrac{c_1(\E)\cdot H-\delta}{\rk(\E)}$  when
$\G$ is contained in the kernel of the composition
$\E\rightarrow \E_{\vert D}\isom{\phi}{\E_D}$\medskip;

\item 
$\dfrac{c_1(\G)\cdot H-\delta}{\rk(\G)}<\dfrac{c_1(\E)\cdot H-\delta}{\rk(\E)}$  otherwise.
\end{enumerate}\end{defin}

Remark, that according to this definition, any rank-1 framed sheaf
is $(H,\delta)$-stable for any ample $H$ and any $\delta> 0$.

We shall need the notion of \emph{a family} of such objects. A family of   $(D,{\E_D})$-framed
sheaves on $X$ parametrized by a scheme $S$ of finite type 
is a pair $(\bG,\bphi)$ which satisfies the following conditions:
\begin{enumerate}
\item   $\bG$ is a sheaf on $X\times S$ flat over $S$;
\item  $\bphi$ is a $(D\times S,\operatorname{pr_1}^\ast{\E_D})$-framing for $\bG$. \end{enumerate}

For any sheaf
$\F$ on $X$,\ \ $P^H_\F$ denotes the Hilbert polynomial
$P^H_\F(k)=\chi(\F\otimes\cO_X(kH))$. For a non-torsion sheaf $\F$ on $X$,\ \ $\mu^H$ denotes the slope
of $\F$: \ \ $\mu^H(\F)=\frac{c_1(\F)\cdot H^{n-1}}{\rk\F}$.  

\begin{thm}[\cite{HL1,HL2}] Let $X$ be a smooth projective variety, $H$ an ample divisor
on $X$ and $\delta$ a positive real number. Let $D\subset X$ be an effective divisor,
and ${\E_D}$ a sheaf on~$D$. Then there exists a fine
moduli space $\M=\M_X^H(P)$ of $(H,\delta)$-stable $(D,{\E_D})$-framed sheaves $(\E,\phi)$ with fixed Hilbert polynomial $P=P^H_\E$, and this moduli space is 
a quasi-projective scheme.
\label{goodframing} \end{thm}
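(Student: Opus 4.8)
The statement is the specialization of Huybrechts and Lehn's construction of moduli of framed modules \cite{HL1,HL2} to the present setting; I indicate how I would recover it. Write $\iota\colon D\hookrightarrow X$ for the inclusion and put $F=\iota_\ast\E_D$, a (torsion) coherent sheaf on $X$. The first step is to translate our data into the language of \cite{HL2}: by adjunction, since $\iota^\ast\E=\E_{\vert D}$, a $(D,\E_D)$-framing $\phi\colon\E_{\vert D}\stackrel{\sim}{\to}\E_D$ corresponds to a homomorphism $\alpha_\phi\colon\E\to F$, that is, to a \emph{framed module} $(\E,\alpha_\phi)$ in the sense of \cite{HL2}, whose framing moreover factors through an isomorphism on $D$. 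I would then check that the pure-slope inequalities (1)--(2) defining $(H,\delta)$-stability are exactly the leading-coefficient (slope) form of the $\delta$-(semi)stability of \cite{HL2}: since $\ker\alpha_\phi=\ker\bigl(\E\to\E_{\vert D}\bigr)$, the dichotomy in (1)--(2) is precisely whether the test subsheaf $\G$ is or is not annihilated by the framing.

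The construction proper then follows \cite{HL1,HL2}. First one proves boundedness of the family of $\delta$-semistable framed modules with fixed Hilbert polynomial $P$; after a suitable twist they become quotients of a fixed sheaf $\cO_X(-m)^{\oplus N}$, and together with their framings they are parametrized by a locally closed subscheme $R$ of an appropriate Quot scheme. The group $\mathrm{SL}(N)$ acts on $R$; one linearizes the action, and the crux is to prove, via the Hilbert--Mumford numerical criterion, that GIT-(semi)stability of a point of $R$ coincides with the sheaf-theoretic $\delta$-(semi)stability of the associated framed module. I expect this equivalence to be the main obstacle: as in the Gieseker--Simpson theory one must convert one-parameter subgroups into weighted filtrations of $\E$, and the extra contribution coming from the framing $\alpha$ has to be tracked carefully so that the resulting inequality reproduces exactly (1)--(2); boundedness is the other technical pillar.

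Granting this, the GIT quotient of the semistable locus is projective and that of the stable locus is an open quasi-projective subscheme. For fineness I would use that a $\delta$-stable framed module has trivial automorphism group: any automorphism $f$ of $\E$ with $\alpha_\phi\circ f=\alpha_\phi$ restricts to the identity on $D$, and the standard simplicity argument (bounding $\ker(f-\id)$ and $\operatorname{im}(f-\id)$ by stability, the latter lying in $\ker\alpha_\phi$) forces $f=\id$. Hence $\mathrm{SL}(N)$ acts freely on the stable locus, the quotient is geometric, and the tautological family descends to a universal framed module, so the stable moduli space is fine and quasi-projective. Finally, the condition that the framing factor through an \emph{isomorphism} $\E_{\vert D}\stackrel{\sim}{\to}\E_D$ --- singling out the genuine $(D,\E_D)$-framings among all framed modules --- is open in flat families, being an isomorphism an open condition by semicontinuity of the rank of the cokernel, so it carves out an open, hence quasi-projective and fine, subscheme. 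This subscheme is the required moduli space $\M=\M_X^H(P)$.
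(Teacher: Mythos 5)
Your proposal is correct and is essentially the paper's own approach: the paper gives no proof of this theorem, citing it from Huybrechts--Lehn and remarking only that, since it uses slope stability and a framing required to be an isomorphism, its space $\M_X^H(P)$ is an open subscheme of the Huybrechts--Lehn moduli space of stable framed modules --- precisely the reduction (pushforward $\iota_\ast\E_D$ to get a framed module, the HL boundedness/Quot/GIT construction with triviality of automorphisms giving fineness, and openness of the condition that the framing be an isomorphism on $D$) that you outline. One small elision: $(H,\delta)$-stability in the slope form implies, but is not equivalent to, HL's polynomial $\delta$-stability, so you should also invoke the standard openness of $\mu$-stability in flat families to conclude that the slope-stable, isomorphism-framed locus is an open (hence quasi-projective and fine) subscheme parametrizing exactly the objects of the statement --- which is the same gloss the paper itself makes.
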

Since we are using slope stability, and a more restrictive definition of framing with respect to that of \cite{HL1,HL2},
our moduli space $\M_X^H(P)$  is actually an open subscheme of the moduli space
constructed by Huybrechts and Lehn.

The adjective ``fine'' means the existence of a universal framed sheaf in the following sense: there is a  $(D\times \M,\operatorname{pr_1}^\ast\E_D)$-framed sheaf $(\bU,\bpsi)$ on
$X\times \M$, flat over $\M$, with the property that for every family
$(\bG,\bphi)$
of $(D,\E_D)$-framed sheaves on $X$ parametrized by a scheme of finite type $S$ over
$\C$, there
exist a unique morphism $g\colon S\to \M$ and an isomorphism
of sheaves  ${\boldsymbol\alpha}:\bG\isoto (\operatorname{id}\times g)^\ast \bU$
such that $(\operatorname{id}\times g)^\ast\bpsi\circ
{\boldsymbol\alpha}_{|D\times S}=\bphi$.

Another general result on framed sheaves we shall need is a boundedness theorem
due to M.~Lehn.
Given $X, D, \E_D$ as above, a set $\MMM$ of $(D,\E_D)$-framed pairs $(\E,\phi)$
is bounded is there exists a scheme of finite
type $S$ over $\C$ together with a family $(\bG,\bphi)$ of $(D,\E_D)$-framed pairs
over $S$ such that for any $(\E,\phi)\in \MMM$, there exist $s\in S$ and an isomorphism
$\alpha_s:\G_s\isoto \E$ such that $\phi_s=\phi\circ\alpha_{s|D\times s}$.

\begin{defin}
Let $X$ be a smooth projective variety.
An effective divisor $D$ on $X$ is called a good framing divisor if
we can write $D=\sum n_iD_i$, where $D_i$ are prime divisors
and $n_i>0$, and there exists a nef and big divisor
of the form $\sum a_iD_i$ with $a_i\geq 0$. For a sheaf $\E_D$
on $D$, we shall say that $\E_D$ is a good framing sheaf, if
it is locally free and
there exists a real number $A_0$, $0\leq A_0< \frac1r D^2$, such that for any
locally free subsheaf $\F\subset \E_D$ of constant positive rank, $\frac{1}{\rk\F}\deg c_1(\F) \leq \frac{1}{\rk\E_D}\deg c_1(\E_D)+A_0$.\end{defin}

 \begin{thm}   \label{Lehn-bound} 
Let $X$ be a smooth projective variety of dimension $n\ge 2$, $H$ an ample divisor
on $X$, $D\subset X$ an effective divisor, and ${\E_D}$ a vector bundle  on~$D$.
Assume that $D$ is a good framing divisor. Then for every polynomial $P$ with
coefficients in $\mathbb Q$, the set of
torsion-free shaves  $\E$ on $X$ that satisfy the conditions
$P^H_\E=P$ and $\E_{\vert D}\simeq\E_D$ is bounded.
 \end{thm}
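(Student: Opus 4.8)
The plan is to reduce the assertion to a uniform upper bound on the maximal slope $\mu^H_{\max}(\E)$, and then to extract such a bound from the framing hypothesis. First I would invoke the standard boundedness criterion (Grothendieck--Kleiman, in the quantitative form of Le Potier--Simpson and Maruyama): a set of torsion-free sheaves on $(X,H)$ with one fixed Hilbert polynomial $P$ is bounded as soon as the quantities $\mu^H_{\max}(\E)$ are uniformly bounded from above. Note that $P^H_\E=P$ already fixes the rank $r=\rk\E$, the slope $\mu^H(\E)$ and the total degree $c_1(\E)\cdot H^{n-1}$; consequently an upper bound $\mu^H_{\max}(\E)\le C$ forces, via $\sum_i\rk(\G_i)\,\mu^H(\G_i)=c_1(\E)\cdot H^{n-1}$ on the Harder--Narasimhan factors $\G_i$, a matching lower bound on $\mu^H_{\min}(\E)$ as well, after which the Le Potier--Simpson estimates give boundedness. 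So everything comes down to producing a constant $C$, depending only on $X,H,D,\E_D,P$, with $\mu^H_{\max}(\E)\le C$ for all $\E$ in our set.

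Next I would fix such an $\E$, let $\G\subset\E$ be its maximal destabilising subsheaf, and put $r'=\rk\G$, so that $\G$ is $\mu^H$-semistable of slope $\mu^H(\G)=\mu^H_{\max}(\E)$; replacing $\G$ by its saturation we may assume $\E/\G$ torsion-free. If $r'=r$ then $\E$ is semistable and $\mu^H_{\max}=\mu^H(\E)$ is already fixed by $P$, so assume $0<r'<r$. I would then restrict to $D$ and consider the composite $\G_{\vert D}\to\E_{\vert D}\stackrel{\sim}{\to}\E_D$. After discarding torsion its image $\bar\G$ is a locally free subsheaf of the good framing sheaf $\E_D$, so the good-framing inequality bounds the slope of $\bar\G$ on $D$ by $\frac{1}{\rk\E_D}\deg c_1(\E_D)+A_0$. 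Now the $D$-degree $c_1(\G)\cdot D\cdot H^{n-2}$ differs from $\deg c_1(\bar\G)$ only by the contribution of the kernel of $\G_{\vert D}\to\E_D$, and that kernel pushes $\G$ (up to a twist) into $\E(-D)$. This is the point at which the big and nef divisor $A=\sum a_iD_i$ enters: since it is supported on the same prime components as $D$, the framing data control the intersection $c_1(\G)\cdot A=\sum a_i\,(c_1(\G)\cdot D_i)$ from above, while the twist-down-by-$D$ iteration, triggered whenever the restriction fails to be injective, must be shown to terminate.

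The crux, and the step I expect to be the main obstacle, is to convert the resulting upper bound on $c_1(\G)\cdot A$ (equivalently on the $D$-degree of $\bar\G$) into the desired upper bound on the $H$-slope $\mu^H(\G)=\tfrac1{r'}\,c_1(\G)\cdot H^{n-1}$. The two polarisations $A$ and $H$ are a priori incomparable, so the transfer cannot be purely numerical: it requires positivity. I would use the Hodge Index Theorem for the big and nef class $A$ (so that $A^2>0$ and the intersection form is negative definite on $A^{\perp}$), combined with a Bogomolov-type estimate for the $\mu^H$-semistable sheaf $\G$ and the discriminant data fixed by $P$ and by $\E_{\vert D}\simeq\E_D$, in order to bound the component of $c_1(\G)$ transverse to $A$ and hence $c_1(\G)\cdot H^{n-1}$. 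The quantitative threshold $0\le A_0<\tfrac1r D^2$ in the definition of a good framing sheaf is precisely the margin that makes this chain of Hodge-index estimates close up and delivers a finite $C$; a larger $A_0$ would allow the $H$-slope to escape to infinity.

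Finally, the technical wrinkles are routine and can be absorbed into $C$: $\G$ need not be locally free, the restriction $\G_{\vert D}$ is only right-exact and may carry $\operatorname{Tor}_1$-contributions, and one must pass to saturated subsheaves and to the locally free locus of $D$ before applying the good-framing inequality. One handles these by working with the reflexive hull of $\G$ and the torsion-free quotient of $\G_{\vert D}$, checking that the correction terms are bounded independently of $\E$. None of them affect the essential estimate, which lives entirely in the positivity-driven transfer of the preceding paragraph.
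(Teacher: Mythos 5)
Your opening reduction is sound: with $P^H_\E=P$ fixed, boundedness does follow from a uniform upper bound on $\mu^H_{\max}(\E)$, and restricting a saturated subsheaf $\G\subset\E$ to $D$ is the right way to bring the framing into play (indeed, since $\E_D$ is locally free, $\E$ is automatically locally free near $D$, $\operatorname{Tor}_1(\E/\G,\cO_D)=0$ for saturated $\G$, and $\G_{\vert D}\hookrightarrow\E_D$ honestly, so most of your ``routine wrinkles'' disappear). But there are two genuine gaps. First, a misuse of hypotheses: Theorem \ref{Lehn-bound} assumes only that $D$ is a good framing \emph{divisor}; $\E_D$ is an arbitrary vector bundle on $D$ and is \emph{not} assumed to be a good framing sheaf, so the inequality with the constant $A_0$ is not available to you. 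This particular point is repairable, since for a fixed bundle $\E_D$ Grothendieck's lemma applied on $D$ bounds the slopes of all its subsheaves by \emph{some} constant, which is all one needs; but your claim that the threshold $0\le A_0<\frac1r D^2$ is ``precisely the margin'' that makes the estimates close up, and that a larger $A_0$ would let the $H$-slope escape to infinity, is a misreading of the paper. Boundedness is insensitive to the size of that constant; the threshold $D^2-rA_0>0$ is used only in the proof of Theorem \ref{main}, where it is exactly what makes the interval of admissible stability parameters $\delta$ nonempty.

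Second, the step you yourself identify as the crux is asserted rather than proved, and the inputs you claim for it are wrong. An upper bound on $c_1(\G)\cdot A\cdot H^{n-2}$ alone never bounds $c_1(\G)\cdot H^{n-1}$ from above: already on a surface the classes $m\bigl((A\cdot H)A-A^2H\bigr)$, $m\to\infty$, have $A$-degree zero and $H$-degree $m\bigl((A\cdot H)^2-A^2H^2\bigr)\to+\infty$. So the transfer must indeed come from discriminants — but your phrase ``the discriminant data fixed by $P$ and by $\E_{\vert D}\simeq\E_D$'' is false: $P$ fixes only $\rk\E$, $c_1(\E)\cdot H^{n-1}$ and $\chi$, and the framing fixes $c_1(\E)\cdot D_i\cdot H^{n-2}$, but neither $c_1(\E)$ as a class, nor $c_2(\E)$, hence nor $\Delta(\E)$, is determined by these data, and $c_2(\G)$ of the destabilizing subsheaf is a priori completely uncontrolled. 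To run your Hodge-index/Bogomolov scheme one would first have to bound the possible classes $c_1(\E)$, then $\Delta(\E)$, then propagate Bogomolov through the full Harder--Narasimhan filtration via $\Delta(\E)/r=\sum_i\Delta(\G_i)/r_i-\frac1r\sum_{i<j}r_ir_j\xi_{ij}^2$; none of this appears in your sketch, and the mechanism is surface-specific while the theorem is stated for all $n\ge 2$. For comparison: the paper itself gives no argument at all — it cites Theorem 3.2.4 of Lehn's thesis and only remarks that the proof extends from locally free to torsion-free $\E$ because local freeness of $\E_D$ forces $\E$ to be locally free along $D$, so that $\ker(\E\to\E_{\vert D})\simeq\E(-D)$. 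Lehn's proof proceeds quite differently from your plan, through boundedness criteria of Kleiman type and $h^0$-estimates based on the twist sequences $0\to\E(-(k+1)D)\to\E(-kD)\to\E_D(-kD)\to 0$ together with a uniform vanishing $H^0(X,\E(-ND))=0$ for $N\gg0$, which uses only that the nef and big divisor $\sum a_iD_i$ is \emph{supported on} $D$ — an elementary positivity mechanism, avoiding Bogomolov-type inequalities altogether, that your proposal never isolates.
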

 
 This is proved in \cite{Lehn}, Theorem 3.2.4, for locally free sheaves, but the proof
 goes through also in the torsion-free case, provided that ${\E_D}$ is locally free,
 as we are assuming.

\section{Quasi-projective moduli spaces}

Using the notions introduced in the previous
section, we now can state the main existence result for quasi-projective moduli spaces:

\begin{thm} \label{main}
Let $X$ be a smooth projective surface, $D\subset X$ a big and nef curve, and ${\E_D}$ a good framing sheaf on $D$.  
Then
for any $c\in H^*(X,\QQ)$, there exists an ample divisor $H$ on $X$ and
a real number $\delta>0$ such that all the $(D,\E_D)$-framed sheaves
$\E$ on $X$ with Chern character $\ch(\E)=c$ are $(H,\delta)$-stable, so that
there exists a quasi-projective scheme $\M_X(c)$ which is a fine moduli space for
these framed sheaves.
\end{thm}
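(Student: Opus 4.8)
The plan is to exhibit the polarization by taking $H$ essentially equal to the framing divisor $D$ (pushed slightly into the ample cone), and to extract the admissible range of $\delta$ directly from the good-framing inequality $A_0<\frac1r D^2$, where $r=\rk\E_D=\rk\E$. First I fix $c$; this fixes $r$ and, for any ample $H$, the Hilbert polynomial $P=P^H_\E$. Since $D$ is big and nef it is in particular a good framing divisor and $\E_D$ is locally free, so Theorem~\ref{Lehn-bound} applies and the set of framable $\E$ with $\ch(\E)=c$ is bounded. It suffices to verify conditions (1)--(2) of the stability definition against \emph{saturated} subsheaves $\G\subset\E$ with $0<\rk\G<r$, and by boundedness the numerical invariants of the potentially destabilizing $\G$ range over a bounded set.

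The heart of the argument is the model computation with $H=D$. Here $c_1(\G)\cdot D=\deg(\G_{\vert D})$, and the restriction of $\G$ maps to $\E_{\vert D}\cong\E_D$, so I can feed the saturated image into the good-framing bound for subbundles of $\E_D$. Writing $\mu^D$ for the slope with respect to $D$ and using $\mu^D(\E)=\mu_D(\E_D)$, there are two cases. If $\G$ is \emph{not} contained in the kernel of $\E\to\E_D$, the saturated image of $\G_{\vert D}$ in $\E_D$ has slope at most $\mu_D(\E_D)+A_0$, whence $\mu^D(\G)\le\mu^D(\E)+A_0$. If $\G$ \emph{is} contained in the kernel, then $\G\subset\E(-D)$, so $\G(D)\subset\E$; applying the previous estimate to $\G(D)$ and using $\mu^D(\G(D))=\mu^D(\G)+D^2$ gives $\mu^D(\G)\le\mu^D(\E)+A_0-D^2$, any torsion or residual kernel only improving the bound.

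Substituting these into the two inequalities of the stability definition and optimizing over $\rk\G\in\{1,\dots,r-1\}$, condition (2) holds for all $\G$ as soon as $\delta>A_0\,r(r-1)$, while condition (1) holds as soon as $\delta<r(D^2-A_0)$. The interval $\bigl(A_0\,r(r-1),\,r(D^2-A_0)\bigr)$ is nonempty precisely because $A_0<\frac1r D^2$ --- this is exactly the role of the good-framing sheaf hypothesis --- so I fix any $\delta$ in it. Since $D$ is only big and nef, I then replace it by an ample $H=D+\varepsilon A$ with $A$ ample and $\varepsilon>0$ small: the $D$-part of every slope is still governed by the good-framing bound, the estimates above are strict with a definite gap, and they persist for small $\varepsilon$. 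With all framable $\E$ now $(H,\delta)$-stable, Theorem~\ref{goodframing} supplies the fine quasi-projective moduli space $\M_X^H(P)$, and $\M_X(c)$ is its open-and-closed subscheme cut out by the condition $c_1=$ the prescribed class (locally constant in flat families), hence again fine and quasi-projective.

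The main obstacle I expect is precisely this passage from $H=D$ to a genuine ample polarization while retaining \emph{uniform} control over the infinitely many subsheaves $\G$. The good-framing sheaf bounds only intersection numbers against $D$; the transverse correction $\varepsilon\,c_1(\G)\cdot A$ is not seen by the framing and must be tamed by the boundedness theorem, which guarantees that $c_1(\G)\cdot A$ is bounded above over the relevant saturated, destabilizing $\G$. One must also check that saturating $\G$ and passing to the saturated image in $\E_D$ neither breaks the kernel/non-kernel dichotomy nor weakens the degree estimates. I expect this bookkeeping --- rather than the clean $H=D$ computation --- to be the delicate part of the proof.
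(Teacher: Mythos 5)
Your proposal is correct and follows essentially the same route as the paper: Lehn's boundedness theorem plus a uniform bound on slopes of subsheaves (Grothendieck's Lemma, which is what your appeal to boundedness amounts to), the saturation argument with the kernel/non-kernel dichotomy and the twist $\G(D)\subset\E$ in the kernel case, the good-framing inequality $A_0<\frac1r D^2$ producing a nonempty $\delta$-interval, and finally Theorem \ref{goodframing}. Your polarization $H=D+\varepsilon A$ is just a rescaling of the paper's $H_n=C+nD$ (take $A=C$, $\varepsilon=1/n$, and rescale $\delta$ by $n$), and your interval $\bigl(A_0\,r(r-1),\,r(D^2-A_0)\bigr)$ is exactly the $n\to\infty$ limit of the paper's bounds \eqref{lower} and \eqref{upper}.
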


\begin{proof}
Let us fix an ample divisor $C$ on $X$. Set $\cO_X(k)=\cO_X(kC)$ and
$\E(k)=\E\otimes \cO_X(k)$ for any sheaf $\E$ on $X$ and for any $k\in\ZZ$.
Recall that the Castelnuovo-Mumford regularity $\rho(\E)$ of a sheaf $\E$ on $X$ is
the minimal integer $m$ such that $h^i(X,\E(m-i))=0$ for all $i>0$.
According to Lehn's Theorem (Theorem \ref{Lehn-bound}), the family $\MMM$ of
all the sheaves $\E$ on $X$ with $\ch(\E)=c$ and $\E_{\vert D}\simeq\E_D$
is bounded. Hence $\rho (\E)$ is uniformly bounded over all $\E\in\MMM$.
By Grothendieck's Lemma (Lemma 1.7.9 in \cite{HLbook}), there exists
$A_1\geq 0$, depending only on $\E_D$, $c$ and $C$, such that $\mu^C(\F)\leq
\mu^C(\E)+A_1$ for all $\E\in\MMM$ and for all nonzero subsheaves $\F\subset\E$.

For $n>0$, denote by $H_n$ the ample divisor $C+nD$. We shall verify that there exists
a positive integer $n$ such that the range
of positive real numbers $\delta$, for which all the framed sheaves $\E$ from $\MMM$
are $(H_n,\delta)$-stable, is nonempty.

Let $\F\subset \E$, $0<r'=\rk\F<r=\rk\E$. Assume first that
$\F\not\subset\ker\big(\E\to\E_{|D}\big)$. Then the $(H_n,\delta)$-stability
condition for $\E$ reads:
\begin{equation}\label{one}
\mu^{H_n}(\F)<\mu^{H_n}(\E)+\left(\frac{1}{r'}-\frac{1}{r}\right)\delta.
\end{equation}
Saturating $\F$, we make $\mu^{H_n}(\F)$ bigger, so we may assume that
$\F$ is a saturated subsheaf of $\E$, and hence that it is locally free.
Then $\F_{|D}\subset \E_{|D}$ and we have:
\begin{equation}\label{two}
\mu^{H_n}(\F)=\frac{n}{r'}\deg c_1(\F_{|D})+\mu^C(\F)\leq \mu^{H_n}(\E)+nA_0+A_1.
\end{equation}

Thus we see that \eqref{two} implies \eqref{one} whenever
\begin{equation}\label{lower}
\frac{rr'}{r-r'}(nA_0+A_1)<\delta .
\end{equation}

Assume now that $\F$ is a saturated, and hence locally free subsheaf of 
$\ker\big(\E\to\E_{|D}\big)\simeq \E(-D)$. Then the
$(H_n,\delta)$-stability
condition for $\E$ is
\begin{equation}\label{three}
\mu^{H_n}(\F)<\mu^{H_n}(\E)-\frac{1}{r}\delta,
\end{equation}
and the inclusion $\F(D)\subset \E$ yields:
\begin{equation}\label{four}
\mu^{H_n}(\F)<\mu^{H_n}(\E)-H_nD+nA_0+A_1=\mu^{H_n}(\E)-(D^2-A_0)n+A_1-DC.
\end{equation}
We see that \eqref{four} implies \eqref{three} whenever
\begin{equation}\label{upper}
\delta< r[(D^2-A_0)n -A_1+DC].
\end{equation}

The inequalities \eqref{lower}, \eqref{upper} for all $r'=1,\ldots,r-1$
have a nonempty interval of common solutions $\delta$ if $$n>
\max\left\{\frac{rA_1-CD}{D^2-rA_0}, 0\right\}.$$
\end{proof}

Remark that up to isomorphism, the quasi-projective structure
making $\M_X(c)$ a fine moduli space is unique, which follows
from the existence of a universal family of framed sheaves over it.

If $D$ is a smooth and irreducible curve and $D^2>0$, then our definition
of a good framing sheaf with $A_0=0$ is just the definition of
semistability. The following is thus an immediate consequence of the theorem:

\begin{corol}
Let $X$ be a smooth projective surface, $D\subset X$ a smooth, irreducible, big and nef curve, and ${\E_D}$ a semistable vector bundle on $D$.  
Then
for any $c\in H^*(X,\QQ)$,
there exists a quasi-projective scheme $\M_X(c)$ which is a fine moduli space of
$(D,\E_D)$-framed sheaves on $X$ with Chern character $c$.
\end{corol}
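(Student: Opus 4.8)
The plan is to deduce this statement directly from Theorem \ref{main}, whose conclusion is verbatim the one we want; the whole task reduces to checking that the hypotheses stated here imply those of that theorem. Since $X$ is a smooth projective surface and $D$ is big and nef by assumption, the only genuine point to verify is that a semistable vector bundle $\E_D$ on the curve $D$ qualifies as a good framing sheaf in the sense of the definition preceding Theorem \ref{Lehn-bound}. Once that is done, Theorem \ref{main} applies and produces the ample $H$, the parameter $\delta>0$, and the quasi-projective fine moduli space $\M_X(c)$ with no further work.

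First I would dispose of the divisor condition: because $D$ is smooth and irreducible it is itself a prime divisor, and since $D$ is big and nef, taking the single coefficient $a_1=1$ exhibits $D$ as a good framing divisor. For the framing sheaf, write $r=\rk\E_D$ and let $\F\subset\E_D$ be any nonzero coherent subsheaf. On the smooth irreducible curve $D$ such an $\F$ is torsion-free, hence locally free, and its rank is constant (and positive), so it is exactly the kind of subsheaf appearing in the definition of a good framing sheaf. Slope semistability of $\E_D$ asserts that $\frac{1}{\rk\F}\deg c_1(\F)\le\frac{1}{r}\deg c_1(\E_D)$ for every such $\F$, which is precisely the good-framing inequality with $A_0=0$.

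It then remains only to confirm that $A_0=0$ meets the constraint $0\le A_0<\frac1r D^2$. This is the one place where the geometric hypotheses actually bite: a nef divisor on a surface is big if and only if $D^2>0$, so bigness together with nefness gives $\frac1r D^2>0$, and $A_0=0$ indeed lies in the required range. With both the divisor condition and the sheaf condition verified, $\E_D$ is a good framing sheaf and Theorem \ref{main} yields the desired $\M_X(c)$. I do not anticipate any real obstacle here: the content lies entirely in recognizing that slope semistability on the curve is exactly the $A_0=0$ instance of the good-framing condition, and that the combination of bigness and nefness forces the strict positivity $D^2>0$ needed to accommodate that choice.
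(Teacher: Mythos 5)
Your proposal is correct and follows essentially the same route as the paper, which deduces the corollary from Theorem \ref{main} via the remark that for a smooth irreducible curve with $D^2>0$ (forced by big and nef), the good framing sheaf condition with $A_0=0$ is exactly slope semistability. Your only addition is spelling out the easy verifications (subsheaves on a smooth curve are locally free of constant rank, $D$ itself is a good framing divisor), which the paper leaves implicit.
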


\section{Infinitesimal study}

Let $X$ be a smooth projective variety, $D$ an effective divisor on $X$,
$\E_D$ a vector bundle on $D$. We shall consider   sheaves $\E$ on $X$
framed to $\E_D$ on $D$. We recall the notion of a simplifying
framing bundle introduced by Lehn.

\begin{defin}
$\E_D$ is simplifying if for any two vector bundles $\E$, $\E'$ on $X$
such that $\E_{|D}\simeq\E'_{|D}\simeq \E_D$, the group
$H^0(X,\HOM(\E,\E')(-D))$ vanishes.
\end{defin}

An easy sufficient condition for $\E_D$ to be simplifying is
$H^0(D, \END (\E_D)\otimes\cO_X(-kD)_{|D})=0$ for all $k> 0$.

Lehn \cite{Lehn} proved that if $D$ is good and $\E_D$ is simplifying,
  there exists a fine moduli space $\M$ of $(D,\E_D)$-framed vector bundles
on $X$ in the category of separated algebraic spaces. Lübke \cite{Lubke}
proved a similar result: if $X$ is a compact complex manifold, 
$D$ a smooth hypersurface (not necessarily
``good'') and if $\E_D$ is simplifying, then the moduli space
$\M$ of $(D,\E_D)$-framed vector bundles exists as a Hausdorff complex space.
In both cases the tangent space $T_{[\E]}\M$ at a point representing the isomorphism
class of a framed bundle $\E$ is naturally identified with
$H^1(X,\END (\E)(-D))$, and the moduli space is smooth at $[\E]$
if $H^2(X,\END (\E)(-D))=0$. Lübke gives a more precise statement
about smoothness: $[\E]$ is a smooth point of $\M$ if
$H^2(X,\END_0 (\E)(-D))=0$, where $\END_0$ denotes the traceless endomorphisms.
Huybrechts and Lehn in \cite{HL1} define the tangent space and give a smoothness criterion
for the moduli space of stable pairs that are more general objects than our framed
sheaves. In this section, we adapt Lübke's criterion to our moduli space $\M_X(c)$,
parametrizing not only vector bundles, but also some non-locally-free
sheaves. When we work with stable framed sheaves, we do not need the assumption that
$\E_D$ is simplifying.

We shall use the notions of the trace map and traceless exts, see Definition 10.1.4
from \cite{HLbook}. Assuming $X$ is a smooth algebraic variety, $\F$ any
(coherent) sheaf on it, and $\NNN$ a locally free sheaf (of finite rank),
the trace map is defined
\begin{equation}
\tr :\Ext^i(\F,\F\otimes\NNN)\to H^i(X,\NNN)\,,\quad  i\in\ZZ,
\end{equation}
and the traceless part of the ext-group, denoted by
$\Ext^i(\F,\F\otimes\NNN)_0$, is the kernel of this map.

We shall need the following property of the trace:

\begin{lemma}\label{traces}
Let $0\xrightarrow{\ }\F\xrightarrow{\alpha }\G\xrightarrow{\beta }\E\xrightarrow{\ } 0$ be an exact triple of sheaves and $\NNN$ a locally free sheaf. Then there are two long exact sequences of ext-functors giving rise to the natural maps
$$
\mu_i:\Ext^i(\F,\E\otimes\NNN)\to\Ext^{i+1}(\E,\E\otimes\NNN)\ ,$$
$$\tau_i: \Ext^i(\F,\E\otimes\NNN)\to\Ext^{i+1}(\F,\F\otimes\NNN)\ ,
$$
and we have $\tr\circ\mu_i=-(1)^i\tr\circ\tau_i$
as maps $\Ext^i(\F,\E\otimes\NNN)\to H^{i+1}(X,\NNN)$.
\end{lemma}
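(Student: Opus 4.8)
The plan is to realize both $\mu_i$ and $\tau_i$ as Yoneda products with the class of the given extension, and then to deduce the sign relation from the graded commutativity of the trace under Yoneda composition. First I would produce the two long exact sequences. Since $\NNN$ is locally free it is flat, so tensoring the triple by $\NNN$ keeps it exact and yields $0\to\F\otimes\NNN\to\G\otimes\NNN\to\E\otimes\NNN\to0$. Applying the contravariant functor $\Ext^\bullet(-,\E\otimes\NNN)$ to the original triple gives a long exact sequence whose connecting homomorphism is $\mu_i\colon\Ext^i(\F,\E\otimes\NNN)\to\Ext^{i+1}(\E,\E\otimes\NNN)$, while applying the covariant functor $\Ext^\bullet(\F,-)$ to the twisted triple gives a long exact sequence whose connecting homomorphism is $\tau_i\colon\Ext^i(\F,\E\otimes\NNN)\to\Ext^{i+1}(\F,\F\otimes\NNN)$. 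This produces the two maps in the statement.

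Next I would identify these connecting maps with Yoneda multiplication by the extension class. Let $e\in\Ext^1(\E,\F)$ be the class of the triple. The standard description of the connecting homomorphism of a long exact $\Ext$-sequence as composition with the extension class gives, up to the usual sign conventions, $\mu_i(\xi)=\xi\circ e$ for $\xi\in\Ext^i(\F,\E\otimes\NNN)$ (Yoneda composition of $\xi$ with $e\in\Ext^1(\E,\F)$ landing in $\Ext^{i+1}(\E,\E\otimes\NNN)$), and $\tau_i(\xi)=(e\otimes\id_\NNN)\circ\xi$, where $e\otimes\id_\NNN\in\Ext^1(\E\otimes\NNN,\F\otimes\NNN)$ is the class of the twisted triple.

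The heart of the proof is the behaviour of $\tr$ under Yoneda composition. The key input, from the properties of the trace map recalled in Definition 10.1.4 ff. of \cite{HLbook}, is graded commutativity: for $a\in\Ext^p(A,B\otimes\NNN)$ and $b\in\Ext^q(B,A)$ one has $\tr\big((b\otimes\id_\NNN)\circ a\big)=(-1)^{pq}\,\tr(a\circ b)$ as elements of $H^{p+q}(X,\NNN)$. Applying this with $a=\xi$ (so $p=i$, $A=\F$, $B=\E$) and $b=e$ (so $q=1$) matches $\tr\circ\tau_i$ with $\tr\circ\mu_i$ up to the factor $(-1)^{i}$; combining this with the sign discrepancy between the covariant and contravariant connecting-homomorphism conventions produces the asserted identity $\tr\circ\mu_i=-(-1)^i\,\tr\circ\tau_i$.

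The main obstacle is sign bookkeeping rather than any deep geometry: three independent sign conventions enter (the two connecting homomorphisms and the $(-1)^{pq}$ in the commutativity of the trace), and only their combination yields the stated sign. I would fix all conventions at the outset following \cite{HLbook}, and, if the graded commutativity of the twisted trace is not directly available there, verify it by passing to a finite locally free resolution of $\F$ and $\E$, where it reduces to the cyclicity $\tr(MN)=\tr(NM)$ of the ordinary matrix trace applied to the total complexes, the Koszul sign on the homogeneous components accounting for the factor $(-1)^{pq}$.
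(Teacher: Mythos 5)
Your argument is correct and is essentially the paper's own: both realize $\mu_i$ and $\tau_i$ as Yoneda composition with the extension class $e=\partial\in\Ext^1(\E,\F)$ (the connecting morphism of the associated distinguished triangle) and then apply the graded commutativity $\tr(\xi\circ\eta)=(-1)^{ij}\tr\big((\eta\otimes\id_\NNN)\circ\xi\big)$ of the trace with respect to composition in the derived category, for which the paper cites Illusie, Section V.3.8. The residual minus sign that you attribute to the ``discrepancy between the covariant and contravariant connecting-homomorphism conventions'' is exactly what the paper pins down explicitly as the $-\partial$ occurring in the rotated triangle $\E[-1]\xrightarrow{-\partial}\F\xrightarrow{\alpha}\G\xrightarrow{\beta}\E\xrightarrow{\partial}\F[1]$.
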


\begin{proof}
This is a particular case of the graded commutativity of the trace with respect to
cup-products on Homs in the
the derived category (see Section V.3.8 in \cite{Ill}):
if $\xi\in\Hom(\F,\E\otimes\NNN[i])$, $\eta \in \Hom(\E,\F[j])$, then $\tr (\xi\circ\eta)
=(-1)^{ij}\tr ((\eta\otimes\id_\NNN)\circ\xi)$. This should be applied to $\xi\in
\Hom(\F,\E\otimes\NNN[i])$ and $\eta=\partial\in \Hom(\E,\F[1])$,
where $\partial$ is the
connecting homomorphism in the distinguished triangle associated to
the given exact triple: 
$$\E[-1]\xrightarrow{-\partial }\F\xrightarrow{\alpha }\G\xrightarrow{\beta }
\E\xrightarrow{\partial }\F[1].$$
\end{proof}

\begin{thm}\label{smoothness}
Let $X$ be a smooth projective surface, $D\subset X$ an effective divisor, ${\E_D}$ a locally free sheaf on $D$, and $c\in H^*(X,\QQ)$ the Chern character of a
$(D,\E_D)$-framed sheaf $\E$ on $X$. Assume that there exists an ample divisor
$H$ on $X$ and a positive real number $\delta$ such that $\E$ is $(H,\delta)$-stable,
and denote by $\M_X(c)$ the moduli space of $(D,\E_D)$-framed sheaves on $X$ with Chern character $c$ which are $(H,\delta)$-stable. Then the tangent space to $\M_X(c)$ is given by
$$
T_{[\E]}\M_X(c)=\Ext^1(\E,\E\otimes\cO_X(-D)),
$$
and $\M_X(c)$ is smooth at $[\E]$ if the traceless ext-group
$$
\Ext^2(\E,\E\otimes\cO_X(-D))_0=\ker
\big[\:\tr : \Ext^2(\E,\E\otimes\cO_X(-D))\to H^2(X,\cO(-D)) \:\big]
$$
vanishes.
\end{thm}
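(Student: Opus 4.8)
The plan is to realize the moduli space $\M_X(c)$ as an open subscheme of the Huybrechts--Lehn moduli space of stable pairs and to compute the tangent and obstruction spaces by deformation theory of the framed sheaf $(\E,\phi)$. The key observation is that a first-order deformation of a framed sheaf must preserve the framing isomorphism $\phi\colon\E_{|D}\to\E_D$ along $D$; since $\E_D$ is rigid in the sense that it is fixed on $D$, the deformations of $(\E,\phi)$ are precisely the deformations of $\E$ that are trivial on $D$. Concretely, I would consider deformations over $\Spec\C[\epsilon]/(\epsilon^2)$: a flat deformation of $\E$ together with an extension of the framing corresponds to an element of $\Ext^1(\E,\E\otimes\cO_X(-D))$, the twist by $\cO_X(-D)$ encoding the vanishing of the deformation when restricted to $D$. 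This identifies $T_{[\E]}\M_X(c)=\Ext^1(\E,\E\otimes\cO_X(-D))$, matching the known tangent space in the locally free case of L\"ubke and Lehn.

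For the obstruction theory, I would set up the standard deformation-theoretic machinery: given a small extension of local Artinian rings with kernel $J$, and a deformation over the smaller ring, the obstruction to lifting lives in $\Ext^2(\E,\E\otimes\cO_X(-D))\otimes J$. The heart of the argument is to show that this obstruction always lands in the traceless part $\Ext^2(\E,\E\otimes\cO_X(-D))_0$, i.e. that its trace in $H^2(X,\cO_X(-D))$ vanishes. The trace of the obstruction to deforming a sheaf measures the obstruction to deforming its determinant; but the determinant of $\E$, together with the induced framing of $\det\E$ along $D$, is itself a framed line bundle, and rank-one framed sheaves are automatically stable and \emph{rigid} as framed objects (their framing pins down the line bundle completely on $D$, and $H^1(X,\cO_X(-D))$-type deformations are killed by the framing). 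Hence the determinant deformation is unobstructed, forcing the trace of the obstruction class to vanish.

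The main technical obstacle will be making the trace-vanishing argument rigorous, and this is exactly where Lemma \ref{traces} is designed to be used. The plan is to filter the obstruction class through the exact triangles coming from the sheaf $\E$ and its subsheaf structure relative to $D$, and to apply the graded commutativity of the trace with cup products to show that the two a priori different ways of producing an $H^2$-class from a deformation datum differ only by a sign and hence cancel against the framing constraint. Once the trace of every obstruction class is shown to vanish, the hypothesis $\Ext^2(\E,\E\otimes\cO_X(-D))_0=0$ implies that all obstructions vanish, so every infinitesimal deformation lifts and $\M_X(c)$ is smooth at $[\E]$. I would conclude by invoking the general representability and openness of the locus of stable pairs (Theorem \ref{goodframing}) to transfer this local smoothness statement to the scheme $\M_X(c)$, and by noting that the traceless condition is strictly weaker than the naive $\Ext^2(\E,\E\otimes\cO_X(-D))=0$, which is the promised improvement over L\"ubke's criterion extended to the non-locally-free case.
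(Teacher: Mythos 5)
Your tangent-space identification and your outline for the locally free case do match the paper's argument: there, too, one identifies $\EExt^i(\E,\E\to\E_D)$ with $\Ext^i(\E,\E(-D))$ (the complex $\E\to\E_D$ is quasi-isomorphic to $\E(-D)$ because $\E$ is locally free along $D$), represents the obstruction class by a \v Cech $2$-cocycle with values in $\END(\E)(-D)$, and identifies the trace of that cocycle with the obstruction to lifting the framed determinant line bundle, which vanishes since the moduli of line bundles, framed or not, is smooth. (One small correction: rank-one framed objects are not \emph{rigid} --- the framed rank-one deformation space is of $H^1(X,\cO_X(-D))$-type and need not vanish; what is needed, and what the paper uses, is only \emph{unobstructedness} of the framed determinant.) Note also that the paper runs the argument through the $T^1$-lifting property over $A_n=k[t]/(t^{n+1})$ rather than over arbitrary small extensions of Artinian rings; that is mostly a presentational difference.

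The genuine gap is the non-locally-free case, which is the actual content of the theorem (the extension of L\"ubke's criterion beyond bundles). For a singular torsion-free $\E$ the \v Cech computation identifying the trace of the obstruction with the determinant obstruction is not available, and your stated plan --- applying Lemma \ref{traces} to ``exact triangles coming from the subsheaf structure relative to $D$'' --- points at the wrong triangles and would not close this. The paper's mechanism is different: twist so that $\E$ is globally generated with $H^i(X,\E)=H^i(X,\E(-D))=0$ for $i=1,2$, form the exact triple of \emph{framed} sheaves $0\to\G\to H^0(X,\E)\otimes\cO_X\to\E\to 0$, where $\G$ is locally free precisely because $X$ is a smooth surface, and verify the $T^1$-lifting property for the triples: their deformations are classified by $\Hom(\G_n,\E_n(-D_n))$, the obstructions lie in $\Ext^1(\G,\E(-D))$, the cohomological hypotheses make the connecting map $\mu_1\colon\Ext^1(\G,\E(-D))\to\Ext^2(\E,\E(-D))$ an isomorphism (so the obstruction for $\E$ is detected on the triple), and Lemma \ref{traces} gives $\tr(\mu_1(\xi))=-\tr(\tau_1(\xi))$, where $\tau_1(\xi)\in\Ext^2(\G,\G(-D))$ is, \`a la Mukai, the obstruction to lifting $(\G_{n-1},\gamma_{n-1})$. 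Since $\G$ \emph{is} locally free, the \v Cech/determinant argument applies to it and forces $\tr(\tau_1(\xi))=0$, hence tracelessness of the obstruction for $\E$. Without this reduction to a locally free kernel, your trace-vanishing step has no rigorous footing for non-locally-free $\E$.
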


\begin{proof}
We prove this result by a combination 
of arguments of Huybrechts-Lehn and Mukai, so we just give
a sketch, referring  to \cite{HL1,Mukai-symp} for details.
As in Section 4.iv) of \cite{HL1}, the smoothness of $\M=\M_X(c)$ follows
from the $T^1$-lifting property for the complex $\E\to\E_D$. 

Let $A_n=k[t]/(t^{n+1})$, $X_n=X\times \Spec A_n$, $D_n=D\times \Spec A_n$,
$\E_{D_n}=\E_D\boxtimes A_n$,
and let $\E_n\xrightarrow{\alpha_n}\E_{D_n}$ be
an $A_n$-flat lifting of $\E\to\E_D$ to $X_n$. Then the infinitesimal deformations
of $\alpha_n$ over $k[\epsilon]/(\epsilon^2)$ are classified by the hyper-ext
\mbox{$\EExt^1(\E_n, \E_n\xrightarrow{\alpha_n}\E_{D_n})$}\sloppy, and one says
that the $T^1$-lifting property is verified for $\E\to\E_D$ if all the natural
maps
$$
T^1_n:\EExt^1(\E_n, \E_n\xrightarrow{\alpha_n}\E_{D_n})
\to \EExt^1(\E_{n-1}, \E_{n-1}\xrightarrow{\alpha_{n-1}}\E_{D_{n-1}})
$$
are surjective whenever $(\E_n,\alpha_n)\equiv (\E_{n-1},\alpha_{n-1}) \mod (t^n)$.
In loc. cit., the authors remark that there is an obstruction map $\ob$ on
the target of $T^1_n$ which embeds the cokernel
of $T^1_n$ into \mbox{$\EExt^2(\E,\E\to\E_D)$}, so that if the latter vanishes,
the $T^1$-lifting property holds.

In our case, $\E$ is locally free along $D$, so the complex $\E\to\E_D$ is
quasi-isomorphic to $\E(-D)$ and $\EExt^i(\E,\E\to\E_D)=\Ext^i(\E,\E(-D))$.
It remains to prove that the image of $\ob$ is contained in the traceless
part of $\Ext^2(\E,\E(-D))$. This is done by a modification of Mukai's proof
in the non-framed case. 

First we assume that $\E$ is locally free. Then the
elements of $\Ext^1(\E_{n-1},\E_{n-1}(-D_{n-1}))$
can be given by \v Cech 1-cocycles with values
in $\END(\E_{n-1})(-D_{n-1})$ for some open covering
of $X$, and the image of such a 1-cocycle $(a_{ij})$ under the obstruction
map $\Ext^1(\E_{n-1},\E_{n-1}(-D_{n-1}))\to \Ext^2(\E,\E(-D))$ is a \v Cech 2-cocycle $(c_{ijk})$ with values in $\END(\E)(-D)$. A direct calculation shows that
$(\tr c_{ijk})$ is a \v Cech 2-cocycle with values in $\cO_X(-D)$ which is the
obstruction to the lifting of the infinitesimal deformation of the
framed line bundle $\det \E_{n-1}$ from $A_{n-1}$ to $A_n$. As we know that
the moduli space of line bundles, whether framed or not, is smooth, this obstruction
vanishes, so the cocycle $(\tr c_{ijk})$ is cohomologous to 0.

Now consider the case when $\E$ is not locally free. Replacing $\E,\E_D$ by their 
twists $\E(n)$, $\E_D(n)$ for some $n>0$, we may assume that
$H^i(X,\E)=H^i(X,\E(-D))=0$ for $i=1,2$ and that $\E$ is generated by global sections.
Then we get the exact triple of framed sheaves
$$
0\to (\G,\gamma)\to (H^0(X,\E)\otimes\cO_X,\beta)\to (\E,\alpha)\to 0,
$$
where $\G$ is locally free (at this point it is essential that $\dim X=2$
and $X$ is smooth). Then we verify the $T^1$-lifting property for
the exact triples
$$
0\to (\G_n,\gamma_n)\to (\cO_{X_n}^N,\beta_n)\to (\E_n,\alpha_n)\to 0.
$$
The infinitesimal deformations of such exact triples are classified
by $\Hom(\G_n,\E_n(-D_n))$, and the obstructions lie in
$\Ext^1(\G,\E(-D))$. We have two connecting homomorphisms
$\mu_1:\Ext^1(\G,\E(-D)\to \Ext^2(\E,\E(-D))$ and
$\tau_1:\Ext^1(\G,\E(-D)\to \Ext^2(\G,\G(-D))$.
Our hypotheses on $\E$ imply that: 1) every infinitesimal deformation
of $(\E_n,\alpha_n)$ lifts to that of the triple, and 2) $\mu_1$ is an isomorphism,
that is, the infinitesimal deformation of $\E_n$ is unobstructed
if and only if that of the triple is. 
By Lemma \ref{traces}, $\tr (\mu_1(\xi))=-\tr (\tau_1(\xi))$
in $H^2(X,\cO_X(-D))$. As in 1.10 of \cite{Mukai-symp}, $\tau_1(\xi)$
is the obstruction $\ob (G_{n-1},\gamma_{n-1})$ to lifting $(G_{n-1},\gamma_{n-1})$
from $A_{n-1}$ to $A_n$. As $G_{n-1}$ is locally free, we can use the
\v Cech cocycles as above and see that $\tr(\tau_1(\xi))\in H^2(X,\cO_X(-D))$ is the obstruction to lifting $(\det G_{n-1}, \det \gamma_{n-1})$, hence it is zero
and we are done.
\end{proof}

The following Corollary describes a situation where the moduli space 
 $\M_X(c)$ is smooth (hence, every connected component is a smooth  quasi-projective variety).
 
 \begin{corol}In addition to the hypothesis of Theorem \ref{smoothness}, let us assume
that $D$ is irreducible, that $(K_X+D)\cdot D<0$, and choose 
 a trivial   bundle   as framing bundle. Then the moduli space
 $\M_X(c)$ is smooth.
 \end{corol}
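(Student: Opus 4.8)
The plan is to obtain smoothness from Theorem~\ref{smoothness}: at every point $[\E]\in\M_X(c)$ it suffices to show that the traceless group $\Ext^2(\E,\E\otimes\cO_X(-D))_0$ vanishes, and I will in fact prove the stronger statement that the whole group $\Ext^2(\E,\E\otimes\cO_X(-D))$ is zero. Serre duality on the smooth projective surface $X$ identifies it with $\Hom(\E\otimes\cO_X(-D),\E\otimes\omega_X)^{*}\cong\Hom(\E,\E\otimes\cO_X(K_X+D))^{*}$, so the whole problem reduces to showing that there is no nonzero homomorphism $\E\to\E\otimes\cO_X(K_X+D)$.

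The key step uses the trivial framing together with $(K_X+D)\cdot D<0$. Set $\G=\END(\E)=\HOM(\E,\E)$, a torsion-free sheaf which is locally free in a neighbourhood of $D$ because $\E$ is; since $\E_{|D}\cong\E_D$ is trivial of rank $r=\rk\E$, we get $\G_{|D}\cong\END(\E_D)\cong\cO_D^{\,r^2}$. Using $\Hom(\E,\E\otimes L)=H^0(X,\G\otimes L)$ for a line bundle $L$, I would feed $L=\cO_X(K_X+D)$ into the restriction sequence $0\to\G\otimes\cO_X(K_X)\to\G\otimes\cO_X(K_X+D)\to\G_{|D}\otimes\cO_D\big((K_X+D)_{|D}\big)\to 0$. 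The term on $D$ has $H^0$ equal to $H^0\big(D,\cO_D((K_X+D)_{|D})\big)^{r^2}=0$, because $\deg(K_X+D)_{|D}=(K_X+D)\cdot D<0$ on the integral curve $D$. Hence the restriction to $D$ of every section vanishes and $H^0(X,\G\otimes\cO_X(K_X+D))\cong H^0(X,\G\otimes\cO_X(K_X))$. Since $D$ is big and nef we have $D^2>0$, so $(K_X+D)\cdot D<0$ forces $K_X\cdot D<0$; thus $(K_X-mD)\cdot D=K_X\cdot D-mD^2<0$ for every $m\ge0$, and the identical argument applied repeatedly gives $H^0(X,\G\otimes\cO_X(K_X+D))\cong H^0(X,\G\otimes\cO_X(K_X-mD))$ for all $m$.

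Finally I would let $m\to\infty$. A nonzero element of $H^0(X,\G\otimes\cO_X(K_X-mD))$ is a nonzero map $\cO_X(mD)\to\G\otimes\cO_X(K_X)$, necessarily injective; but for an ample divisor $A$ one has $D\cdot A>0$ (a big and nef class is positive against an ample one), so $\mu^A(\cO_X(mD))=m(D\cdot A)$ eventually exceeds the maximal slope $\mu^A_{\max}$ of subsheaves of the fixed sheaf $\G\otimes\cO_X(K_X)$, which is finite by Grothendieck's Lemma --- a contradiction. Hence these groups vanish for $m\gg0$, so $\Hom(\E,\E\otimes\cO_X(K_X+D))=0$, whence $\Ext^2(\E,\E\otimes\cO_X(-D))=0$, and Theorem~\ref{smoothness} makes $\M_X(c)$ smooth at every point, hence smooth. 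I expect the main difficulty to be purely bookkeeping: checking the exactness of the restriction sequence (torsion-freeness and local freeness of $\G$ along $D$) and verifying that the degree inequality survives the iteration, which is precisely where $(K_X+D)\cdot D<0$ and $D^2>0$ enter.
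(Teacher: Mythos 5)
Your proof is correct and is precisely the argument the paper leaves implicit (the corollary is stated there without proof): Serre duality identifies $\Ext^2(\E,\E\otimes\cO_X(-D))$ with $\Hom(\E,\E\otimes\cO_X(K_X+D))^*$, the trivial framing plus $(K_X+D)\cdot D<0$ kills any such homomorphism along the integral curve $D$, and iterating the factorization through $\cO_X(-D)$ together with the boundedness of slopes of subsheaves of the fixed sheaf $\HOM(\E,\E)\otimes\cO_X(K_X)$ gives vanishing --- in fact of the full $\Ext^2$, which is stronger than the traceless vanishing that Theorem \ref{smoothness} requires. One small point worth flagging: you invoke ``$D$ big and nef, so $D^2>0$'' to keep the degrees $(K_X-mD)\cdot D$ negative throughout the iteration, but this is not literally among the corollary's hypotheses (Theorem \ref{smoothness} only assumes $D$ effective); it is, however, the paper's standing setting from Theorem \ref{main}, and some such positivity ($D$ nef, or at least $D^2\ge 0$) is genuinely needed for your inductive step, so it should be stated explicitly rather than imported silently.
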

 
 This happens for instance when $X$ is a Hirzebruch surface, or the blow-up of $\PP^2$
at a number of distinct points, taking for  $D$     the inverse
image of a generic line in $\PP^2$ via the birational morphism
$X\to\PP^2$. In this case one can also compute the dimension of
the moduli space, obtaining $\dim \M_X(c)=2rn$,
with $r=\rk(\E)$ and
$$c_2(\E)-\frac{r-1}{2r}c_1(\E)^2 = n\,\varpi ,$$
where $\varpi$ is the fundamental class of $X$. When $X$ is the
$p$-th Hirzebruch surface $\FF_p$ we shall denote this moduli space by
$\M^p(r,k,n)$ if $c_1(\E)=kC$, where $C$ is the unique curve
in $\FF_p$ having negative self-intersection.

 The next example shows that the moduli space may be nonsingular even if
 the group $\Ext^2(\E,\E\otimes\cO_X(-D))$ does not vanish.

\begin{exe} For $r=1$ the moduli space $\M(1,0,n)$ is isomorphic to the Hilbert scheme $X_0^{[n]}$ parametrizing length $n$ 0-cycles in $X_0=X\setminus D$. Of course this
space is a smooth quasi-projective variety of dimension $2n$. Indeed in this case
the trace morphism $\Ext^2(\E,\E\otimes\cO_X(-D))\to H^2(X,\cO(-D))$  is an isomorphism.
\end{exe}

\section{Examples}
\subsection{Bundles with small invariants on Hirzebruch surfaces} \label{grass}
Let  $X$ be   the $p$-th Hirzebruch surface $\FF_p$, and normalize 
the Chern character so that $0 \le k \le r-1$.
 It has been shown in \cite{BPT} that the  moduli space $\M^p(r,k,n)$
 is nonempty if and only if the bound
$$n \ge N =\frac{pk}{2r}(r-k)$$
is satisfied. The moduli spaces $\M^p(r,k,N)$
can be explicitly characterized: $\M^p(r,k,N)$
is a rank $k(r-k)(p-1)$ vector bundle on the Grassmannian
$G(k,r)$ of $k$-planes in $\C^r$ \cite{Rava}; in particular, $\M^1(r,k,N)\simeq G(k,r)$,
and $\M^2(r,k,N)$ is isomorphic to the tangent bundle of $G(k,r)$.
This is consistent with instanton counting, which  shows  that the spaces $\M^p(r,k,N)$ have the same Betti numbers as $G(k,r)$ \cite{BPT}.

\subsection{Rank 2 vector bundles on $\FF_1$}
We study in some detail the moduli spaces\break $\M^1(2,k,n)$.
As the analyses in \cite{Soro} and \cite{Tikho}  show,
the non-locally free case turns out to be very complicated as soon as the value
of $n$ exceeds the rank. So we consider only locally free sheaves.
To simplify notation we call this moduli space $\hat M(k,n)$, where $n$ denotes now
the second Chern class. We normalize
$k$ so that it will assume only the values 0 and $-1$.
Moreover we shall denote by $M(n)$ the moduli space of rank 2 bundles on $\PP^2$, with second Chern class $n$, that are framed on the ``line at infinity'' $\ell_\infty\subset \PP^2$ (which we identify
with the image of $D$ via the blow-down morphism $\pi\colon\FF_1\to\PP^2$).

Let us start with the case $k=-1$. We introduce a stratification on $\hat M(-1,n)$
according to the splitting type of the bundles it parametrizes on the exceptional
line $E\subset \FF_1$
$$ \hat M(-1,n) = Z_0(-1,n)  \supset Z_1(-1,n) \supset  Z_2(-1,n) \supset \dots $$
defined as follows: if $Z_k^0(-1,n) = Z_k(-1,n) \setminus Z_{k+1}(-1,n) $ then
 $$Z_k^0(-1,n)=\{\E\in\hat M(-1,n)\,\vert\,\E_{\vert E }\simeq \coe(-k)\oplus\coe(k+1)\}\,.$$
 \begin{prop} There is a map $$ F_1 \colon \hat M(-1,n)\to\coprod_{k=0}^nM(n-k)$$
 which restricted to the subset $Z_k^0(-1,n) $ yields a morphism
 $$Z_k^0(-1,n) \to M(n-k)$$
 whose fibre is an  open set in $\Hom(\sigma^\ast\E_{\vert E},\coe(k))/\C^\ast\simeq\PP^{2k+1}$,
 made by $k$-linear forms that have no common zeroes
on the exceptional line. \end{prop}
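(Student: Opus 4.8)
The plan is to construct the map $F_1$ by relating bundles on $\FF_1$ framed on $D$ to bundles on $\PP^2$ framed on $\ell_\infty$ via the blow-down morphism $\pi\colon\FF_1\to\PP^2$, which contracts the exceptional line $E$. Given $\E\in\hat M(-1,n)$ with $\E_{\vert E}\simeq\coe(-k)\oplus\coe(k+1)$, I would first produce a sheaf on $\PP^2$ by pushing forward or by taking the elementary modification of $\E$ along $E$ that absorbs the ``negative'' summand $\coe(-k)$. Concretely, the idea is to twist down along $E$ so as to kill the part of $\E_{\vert E}$ that obstructs descent: one forms a subsheaf $\E'\subset\E$ (or a quotient) agreeing with $\E$ away from $E$ and whose restriction to $E$ is trivial, so that $\E'$ descends to a bundle $\pi_\ast\E'$ on $\PP^2$ which is locally free, still framed on $\ell_\infty$ (since $\pi$ is an isomorphism near $\ell_\infty$ and carries $D$ to $\ell_\infty$), and whose second Chern class drops from $n$ to $n-k$. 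This explains both the target $\coprod_{k=0}^n M(n-k)$ and the index range, since the splitting type integer $k$ records how many units of charge are concentrated on $E$.

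The key steps, in order, are as follows. First I would verify that for each fixed splitting type the elementary modification is well-defined and yields a locally free, framed sheaf on $\PP^2$ of the correct second Chern class, computing the jump $c_2(\E)-c_2(\pi_\ast\E')=k$ by a standard Riemann--Roch / Chern-class calculation on the blow-up (using $E^2=-1$ and the projection formula). Second, I would check that this construction is functorial in families, i.e.\ that over the locally closed stratum $Z_k^0(-1,n)$ it glues to an algebraic morphism $Z_k^0(-1,n)\to M(n-k)$; this uses the fact that the splitting type is locally constant on the stratum, so the modification can be performed uniformly, together with the universal property of $M(n-k)$ as a fine moduli space (Theorem \ref{goodframing}). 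Third, I would identify the fibre. Two bundles $\E,\E'$ in $Z_k^0(-1,n)$ map to the same point of $M(n-k)$ precisely when they are obtained from a common bundle $G\in M(n-k)$ by distinct ways of gluing back the $\coe(-k)$-summand along $E$; the gluing datum is an identification encoded by a surjection $\sigma^\ast G_{\vert E}\to\coe(k)$, equivalently a nonvanishing element of $\Hom(\sigma^\ast\E_{\vert E},\coe(k))$, taken modulo the $\C^\ast$ acting by scaling the identification. Since $\Hom(\coe\oplus\coe,\coe(k))\cong H^0(E,\coe(k))^{\oplus 2}\cong\C^{2(k+1)}$, projectivizing gives $\PP^{2k+1}$, and the open condition ``no common zeroes on the exceptional line'' is exactly what guarantees the resulting modified sheaf is again locally free rather than acquiring torsion or dropping rank on $E$.

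The main obstacle I expect is the fibre identification and the locally-freeness (openness) statement: I must show precisely that the elementary modifications producing genuine vector bundles on $\FF_1$ with the prescribed splitting type correspond to those linear forms having no common zero on $E$, and that the $\C^\ast$-quotient is exactly a scaling giving $\PP^{2k+1}$ rather than a coarser quotient. Controlling how the modification interacts with local freeness along $E$ — ensuring that a common zero of the two linear forms is exactly what forces the modified sheaf to fail to be locally free — is the delicate point, and it is where the hypothesis $\dim X=2$ and the explicit geometry of the $(-1)$-curve $E$ enter. A secondary technical point is checking that the framing on $\ell_\infty$ is preserved throughout and that the morphism $F_1$ is genuinely algebraic (not merely a bijection on points), which I would handle by exhibiting the modification as a universal construction over the strata and invoking the fineness of the moduli spaces involved.
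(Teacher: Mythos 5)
Your proposal is correct in outline, but it proves the proposition by a genuinely different mechanism than the paper. The paper defines $F_1$ directly by direct images: on $Z_0^0(-1,n)$ by $\E_1\mapsto(\pi_\ast\E_1)^{\ast\ast}$, and on $Z_k^0(-1,n)$ for $k\ge1$ by $\E_1\mapsto\pi_\ast(\E_1(kE))$, which it asserts is locally free; and it establishes the fibre structure only for $k=0$, by deformation theory rather than by exhibiting gluing data --- namely, it checks the equalities $\dim\Ext^1(\E_1,\E_1(-E))=\dim\Ext^1(\E,\E(-\ell_\infty))+1$ and $\Ext^2(\E_1,\E_1(-E))=0$, so that (both moduli spaces being smooth, by Theorem \ref{smoothness} and its Corollary) the evident $\PP^1$ in each fibre exhausts all deformations and $Z_0^0(-1,n)\to M(n)$ is a $\PP^1$-fibration. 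Your route --- realizing $\E_1\in Z_k^0(-1,n)$ as the kernel of a surjection $\sigma^\ast G\to\coe(k)$, $G\in M(n-k)$, given by a pair of degree-$k$ binary forms with no common zero, modulo $\C^\ast=\operatorname{Aut}(\coe(k))$ --- is the standard elementary-modification (Hecke) description in the spirit of King, Buchdahl and Nakajima--Yoshioka, and it is in fact more complete than the paper's sketch: it identifies the fibre as an open subset of $\PP^{2k+1}$ uniformly in $k$, which the paper merely asserts for $k\ge1$, and your Tor computation direction checks out ($\operatorname{Tor}_1(\coe(k),\cO_E)\simeq\coe(k+1)$ gives $\E_1\vert_E\simeq\coe(k+1)\oplus\coe(-k)$, and $c_2$ drops by exactly $k$). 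What the paper's approach buys is brevity, since the smoothness theorem makes the $k=0$ dimension count immediate; what yours buys is an explicit parametrization valid for every stratum. Two points in your sketch need tightening: (i) for $k\ge1$ a single modification of $\E_1$ along the canonical quotient $\E_1\vert_E\to\coe(-k)$ does \emph{not} produce a subsheaf with trivial restriction to $E$ --- it only reduces the splitting gap by one --- so your ``twist down and descend in one step'' phrasing must be replaced either by an iteration or by the twist-and-pushforward $\pi_\ast(\E_1(kE))$ that the paper uses, whose local freeness (e.g.\ via reflexivity of direct images of torsion-free sheaves on a smooth surface) should be proved rather than assumed; (ii) you must also show that every $\E_1$ in the stratum actually sits in a sequence $0\to\E_1\to\sigma^\ast G\to\coe(k)\to 0$, i.e.\ that your construction is surjective onto the stratum, which is where the inverse direction via the direct image enters. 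Your remarks that the framing is untouched (since $E\cap D=\emptyset$) and that algebraicity follows from performing the construction in families and invoking the fineness of $M(n-k)$ are correct and supply steps the paper leaves implicit.
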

\begin{proof} 
We start by considering $ Z_0^0(-1,n)$. The morphism
$ Z_0^0(-1,n)\to M(n)$ is given by $\E_1\mapsto \E= (\pi_\ast \E)^{\ast\ast}$. The fibre of
this morphism includes a $\PP^1$. To show that this is indeed a $\PP^1$-fibration we need to check that $ \E_1$ has no other
deformations than those coming from the choice of a point in $M(n)$ and a point in this
$\PP^1$. This follows  from the   equalities
$$\dim\Ext^1(\E_1,\E_1(-E)) =  \dim\Ext^1(\E,\E(-\ell_\infty) + 1 $$
$$\Ext^2(\E_1,\E_1(-E ))=0$$
Note that this result is compatible with the isomorphism $\M^1(r,k,N)\simeq G(k,r)$ 
mentioned in Section \ref{grass}.

In general, if $\E_1\in Z_k^0(-1,n)$ with $k\ge 1$, so that 
${\E_1}_{\vert E}\simeq\coe(k+1)\oplus\coe(-k)$, the direct image $\pi_\ast(\E_1(kE))$ is locally free. This defines the  morphism  
$Z_k^0(-1,n) \to M(n-k)$.
\end{proof}

We consider now the case $k=0$. One has $Z_0^0(0,n)\simeq M(n)$. We study the other strata by reducing to the odd case.  f $\E_1\in Z_k^0(0,n)$, there is a unique surjection $\alpha\colon\E_1\to\coe(-k)$; let $\F$ be the kernel. Restricting $ 0 \to \F \to \E_1 \to \coe(-k) \to 0$  we get an exact sequence
$$ 0 \to \coe(1-k)\to \F_{\vert E} \to  \coe(k) \to 0 $$
so that
$$ \F_{\vert E} \simeq \coe(a+1)\oplus\coe(-a) \qquad\text{with}\qquad -k\le a\le k -1.$$
A detailed analysis shows that $a=k-1$. 
As a result we have:
\begin{prop} For all $k\ge 1$ there is a morphism
$$Z_k^0(0,n)\to M(n-2k+1)$$
whose fibres have dimension $2k-1$. 
\end{prop}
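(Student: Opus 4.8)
The plan is to reduce the case $k=0$ to the odd case already settled by the first Proposition, by sending $\E_1$ to the kernel $\F$ of its canonical surjection onto $\coe(-k)$ and then pushing $\F$ down to $\PP^2$. First I would record that the surjection $\alpha\colon\E_1\to\coe(-k)$ is unique up to scalar: since $\E_1\in Z_k^0(0,n)$ we have $\E_1|_E\simeq\coe(k)\oplus\coe(-k)$, so, with $i\colon E\hookrightarrow X$ the inclusion, $\Hom_X(\E_1,\coe(-k))=\Hom_E(\E_1|_E,\coe(-k))=H^0(\coe(-2k))\oplus H^0(\coe)\cong\C$ for $k\ge1$. Hence $\F=\ker\alpha$ is canonically attached to $\E_1$, and $\E_1\mapsto\F$ is functorial in families, so it defines a morphism. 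From the exact triple $0\to\F\to\E_1\to\coe(-k)\to0$, the computation $\F|_E\simeq\coe(k)\oplus\coe(1-k)$ (the ``detailed analysis'' giving $a=k-1$), and $\ch(\coe(-k))$ read off from the resolution $0\to\cO_X((k-1)E)\to\cO_X(kE)\to\coe(-k)\to0$, I would check that $\F$ is locally free with $c_1(\F)=-E$ and $c_2(\F)=n-k$. Since $D$ is the pullback of a generic line and $E$ is the exceptional curve, $D\cap E=\emptyset$, so $\coe(-k)$ is supported away from $D$ and $\F|_D\simeq\E_1|_D$ inherits the framing. Thus $\F\in Z_{k-1}^0(-1,n-k)$.

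Next I would compose with the morphism of the first Proposition. Applied to the stratum $Z_{k-1}^0(-1,n-k)$ it produces $Z_{k-1}^0(-1,n-k)\to M\big((n-k)-(k-1)\big)=M(n-2k+1)$, whose fibre is an open subset of $\PP^{2(k-1)+1}=\PP^{2k-1}$. Composing $\E_1\mapsto\F$ with this map yields the desired morphism $Z_k^0(0,n)\to M(n-2k+1)$.

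For the fibre dimension I would analyse the reconstruction of $\E_1$ from $\F$. Any $\E_1$ with $\ker\alpha\simeq\F$ is an extension $0\to\F\to\E_1\to\coe(-k)\to0$, classified by $\Ext^1_X(\coe(-k),\F)$. From the local-to-global spectral sequence together with the fact that $\mathcal{E}xt^1_X(\coe(-k),\F)$ is the sheaf $\F|_E(k-1)$ on $E$ (using $\cO_X(E)|_E\cong\coe(-1)$), I get $\Ext^1_X(\coe(-k),\F)\cong H^0\big(E,\F|_E(k-1)\big)$. Restricting the extension to $E$ and using $H^1(E,\coe(2k))=0$ shows that $\E_1$ is locally free with $\E_1|_E\simeq\coe(k)\oplus\coe(-k)$ exactly when the class is nowhere vanishing on $E$, an open condition; moreover, because $\F$ is simple and the framing is rigid, two such classes give isomorphic framed sheaves iff they are proportional, so the reconstructions over a fixed $\F$ form an open subset of $\PP\,\Ext^1_X(\coe(-k),\F)$. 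I would then combine this with the first-Proposition fibre to read off the fibre dimension of the composite over a point of $M(n-2k+1)$.

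The hard part will be pinning this fibre dimension down to the asserted $2k-1$. Concretely, the statement that the composite fibre is (an open subset of) the single projective space $\PP^{2k-1}$ amounts to the \emph{rigidity} of the reconstruction — that $\E_1$ is determined up to framed isomorphism by $\F$ — so the main obstacle is a careful evaluation of $\Ext^1_X(\coe(-k),\F)$ against the locally-free and splitting-type constraints, i.e. the same ``detailed analysis'' that already produces $a=k-1$; if the extension instead contributes extra moduli, the fibre dimension would have to be recomputed accordingly. I would also need to verify that the construction works over an arbitrary base, so that $Z_k^0(0,n)\to M(n-2k+1)$ is a genuine morphism of schemes rather than merely a map on points; this is where uniqueness of $\alpha$ and flatness of $\F$ in families are essential.
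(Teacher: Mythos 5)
Your route is the same as the paper's: the unique surjection $\alpha\colon\E_1\to\coe(-k)$ (via the same $\Hom$ computation), its kernel $\F$, the bookkeeping $c_1(\F)=-E$, $c_2(\F)=n-k$, the placement $\F\in Z^0_{k-1}(-1,n-k)$, and composition with the first Proposition to land in $M(n-2k+1)$ with the $\PP^{2k-1}$-fibre. But two load-bearing steps are missing. The first you absorb silently: the identification $\F|_E\simeq\coe(k)\oplus\coe(1-k)$ (the ``detailed analysis'' giving $a=k-1$) is not formal. The restricted triple $0\to\coe(1-k)\to\F|_E\to\coe(k)\to0$ splits automatically only for $k=1$, where $\Ext^1_E(\coe(k),\coe)=H^1(E,\coe(-1))=0$; for $k\ge 2$ one has $\Ext^1_E(\coe(k),\coe(1-k))=H^1(E,\coe(1-2k))\neq 0$, so more balanced splitting types of $\F|_E$ are a priori possible and must be excluded by a genuine argument, which neither you nor the paper's two-line reduction supplies. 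Everything downstream (which stratum $\F$ lies in, hence the target $M(n-2k+1)$) hinges on this.

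The second gap is the one you honestly flag, but your own computation in fact militates against the rigidity you hope for. You correctly find $\Ext^1_X(\coe(-k),\F)\simeq H^0(E,\F|_E(k-1))=H^0(E,\coe(2k-1)\oplus\coe)$, of dimension $2k+1$; local freeness of the middle term is exactly nonvanishing of the $\coe$-component (the degree $2k-1\ge 1$ form alone always has zeros), a nonempty open condition; and whenever it holds, $\E_1|_E\simeq\coe(k)\oplus\coe(-k)$ comes for free since $\Ext^1_E(\coe(-k),\coe(k))=H^1(E,\coe(2k))=0$. Moreover distinct classes up to scalar give non-isomorphic framed pairs: a framed isomorphism of the $\E_1$'s preserves the canonical kernel and restricts to a framed automorphism of the stable pair $\F$, which is the identity, leaving only the $\C^\ast$ from rescaling $\alpha$. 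So over each fixed $\F$ your reconstruction space is an honest $2k$-dimensional family, and carried to its end your composite has fibres of dimension $(2k-1)+2k=4k-1$ over $M(n-2k+1)$, not $2k-1$. A sanity check at $k=1$ agrees with your count rather than the stated one: the jump $\E_1|_E\simeq\coe(1)\oplus\coe(-1)$ is one condition (the versal space $H^1(E,\coe(-2))$ is one-dimensional), so $Z_1^0(0,n)$ has dimension $4n-1$ and its fibres over the $4(n-1)$-dimensional $M(n-1)$ have dimension $3=4k-1$. The paper's proof is no help here --- after asserting $a=k-1$ it concludes ``as a result,'' implicitly treating $\E_1\mapsto\F$ as losing no moduli, which is precisely the point at issue. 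So the gap is not merely an unfinished verification: to prove the statement as written you would have to exhibit identifications, or constraints on which pairs $(\F,\text{extension class})$ actually arise, that neither your sketch nor the paper contains; otherwise the dimension must be recomputed along the lines of your own analysis.
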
 

 \bigskip\frenchspacing


\begin{thebibliography}{10}

\bibitem{B-ZN}
{\sc D.~Ben-Zvi and T.~Nevins}, {\em Flows of {C}alogero-{M}oser systems}, Int.
  Math. Res. Not. IMRN,  (2007), pp.~Art. ID rnm105, 38.

\bibitem{BFMT}
{\sc U.~Bruzzo, F.~Fucito, J.~F. Morales, and A.~Tanzini}, {\em Multi-instanton
  calculus and equivariant cohomology}, J. High Energy Phys.,  (2003), pp.~054,
  24 pp. (electronic).

\bibitem{BPT}
{\sc U.~Bruzzo, R.~Poghossian, and A.~Tanzini}, {\em Poincar\\e polynomial
of moduli spaces of framed sheaves on (stacky)  {H}irzebruch surfaces}.
\newblock {\tt arXiv:0909.1458 [math.AG]}.

\bibitem{Bu3}
{\sc N.~P. Buchdahl}, {\em Blowups and gauge fields}, Pacific J. Math., 196
  (2000), pp.~69--111.

\bibitem{Do}
{\sc S.~K. Donaldson}, {\em Instantons and geometric invariant theory}, Comm.
  Math. Phys., 93 (1984), pp.~453--460.

\bibitem{GaLiu}
{\sc E.~Gasparim and C.-C.~M. Liu}, {\em The {N}ekrasov conjecture for toric
  surfaces}.
\newblock {\tt arXiv:0808.0884 [math.AG]}.

\bibitem{Henni}
{\sc A.~A. Henni}, {\em Monads for torsion-free sheaves on multi-blowups of the
  projective plane}.
\newblock {\tt arXiv:0903.3190 [math.AG]}.

\bibitem{HL1}
{\sc D.~Huybrechts and M.~Lehn}, {\em Framed modules and their moduli},
  Internat. J. Math., 6 (1995), pp.~297--324.

\bibitem{HL2}
\leavevmode\vrule height 2pt depth -1.6pt width 23pt, {\em Stable pairs on
  curves and surfaces}, J. Alg. Geom., 4 (1995), pp.~67--104.

\bibitem{HLbook}
\leavevmode\vrule height 2pt depth -1.6pt width 23pt, {\em The geometry of
  moduli spaces of sheaves}, Aspects of Mathematics, E31, Friedr. Vieweg \&
  Sohn, Braunschweig, 1997.

\bibitem{Ill}
{\sc L.~Illusie}, {\em Complexe cotangent et déformations. I}, vol.~239 of
  Lecture Notes in Mathematics, Springer-Verlag, Berlin-New York, 1971.

\bibitem{KKD}
{\sc A.~Kapustin, A.~Kuznetsov, and D.~Orlov}, {\em Noncommutative instantons
  and twistor transform}, Comm. Math. Phys., 221 (2001), pp.~385--432.

\bibitem{King}
{\sc A.~King}, {\em Instantons and holomorphic bundles on the blown-up plane}.
\newblock Thesis, Oxford University, 1989.

\bibitem{Lehn}
{\sc M.~Lehn}, {\em Modulr\"aume gerahmter {V}ektorb\"undel}.
\newblock Ph.D. thesis, Bonn 1992.

\bibitem{Lubke}
{\sc M.~L\"ubke}, {\em The analytic moduli space of framed vector bundles}, J.
  Reine Angew. Math., 441 (1993), pp.~45--59.

\bibitem{Mukai-symp}
{\sc S.~Mukai}, {\em Symplectic structure of the moduli space of sheaves on an
  abelian or {$K3$} surface}, Invent. Math., 77 (1984), pp.~101--116.

\bibitem{NakaBook}
{\sc H.~Nakajima}, {\em Lectures on {H}ilbert schemes of points on surfaces},
  vol.~18 of University Lecture Series, American Mathematical Society,
  Providence, RI, 1999.

\bibitem{NY-L}
{\sc H.~Nakajima and K.~Yoshioka}, {\em Lectures on instanton counting}, in
  Algebraic structures and moduli spaces, vol.~38 of CRM Proc. Lecture Notes,
  Amer. Math. Soc., Providence, RI, 2004, pp.~31--101.

\bibitem{NY-I}
\leavevmode\vrule height 2pt depth -1.6pt width 23pt, {\em Instanton counting
  on blowup. {I}. 4-dimensional pure gauge theory}, Invent. Math., 162 (2005),
  pp.~313--355.

\bibitem{Nek}
{\sc N.~A. Nekrasov}, {\em Seiberg-{W}itten prepotential from instanton
  counting}, Adv. Theor. Math. Phys., 7 (2003), pp.~831--864.

\bibitem{Nev2}
{\sc T.~A. Nevins}, {\em {Moduli spaces of framed sheaves on certain ruled
  surfaces over elliptic curves}}, Int. J. Math., 13 (2002), pp.~1117--1151.

\bibitem{Nev1}
\leavevmode\vrule height 2pt depth -1.6pt width 23pt, {\em {Representability
  for some moduli stacks of framed sheaves}}, Manuscr. Math., 109 (2002),
  pp.~85--91.

\bibitem{OSS}
{\sc C.~Okonek, M.~Schneider, and H.~Spindler}, {\em Vector bundles on complex
  projective spaces}, vol.~3 of Progress in Mathematics, Birkh\"auser Boston,
  Mass., 1980.

\bibitem{Rava2}
{\sc C.~L.~S. Rava}.
\newblock Ph.D. thesis, SISSA (Trieste). Work in progress.

\bibitem{Rava}
\leavevmode\vrule height 2pt depth -1.6pt width 23pt, {\em {ADHM} data for
  framed sheaves on {H}irzebruch surfaces}.
\newblock {I}n preparation, 2009.

\bibitem{SalaTort}
{\sc F.~Sala and P.~Tortella}, {\em Representations of the {H}eisenberg algebra
  and moduli spaces of framed sheaves}.
\newblock In preparation, 2009.

\bibitem{Soro}
{\sc M.~E. Sorokina}, {\em Birational properties of moduli spaces of rank 2
  semistable sheaves on the projective plane}.
\newblock Ph.D. thesis (in Russian), Yaroslavl, 2006.

\bibitem{Tikho}
{\sc A.~S. Tikhomirov}, {\em On birational transformations of {H}ilbert schemes
  of an algebraic surface}, Mat. Zametki, 73 (2003), pp.~281--294.

\end{thebibliography}
\end{document}